\newfont{\aaa}{cmb10 at 19pt}
\newfont{\bbb}{cmb10 at 11pt}
\newcommand{\ind}{{\mathrm i}{\mathrm n}{\mathrm d\,}}
\newcommand{\Ker}{{\mathrm K}{\mathrm e}{\mathrm r\,}}
\newcommand{\Supp}{{\mathrm S}{\mathrm u}{\mathrm p}{\mathrm p\,}}
\newtheorem{thm}{Theorem}[section]
\newtheorem{prop}[thm]{Proposition}
\newtheorem{cor}[thm]{Corollary}
\newenvironment{proof}{\noindent{\it Proof}.\ \ }{\hfill $ \square $\vskip 4mm}
\numberwithin{equation}{section}
\title{A Lichnerowicz vanishing theorem for proper cocompact actions}
\author{Ziran Liu\\ $\ $
\\ {\footnotesize Chern Institute of Mathematics \& LPMC,
Nankai University, Tianjin 300071,
P. R. China}\\
{\footnotesize \url{ liuziran@mail.nankai.edu.cn}}}
\date{}
\begin{document}
\maketitle
\begin{abstract}
We establish a Lichnerowicz type vanishing theorem for non-compact spin manifolds admiting proper cocompact actions, when the action group is unimodular.
\end{abstract}

\section{Introduction}\label{sec:introduction}
A well-known result of Lichnerowicz \cite{Lichnerowicz1963} states that if a compact spin manifold of dimension $4k$ admits a Riemannian metric of positive scalar curvature, then the index of the associated Dirac opertator vanishes.

The purpose of this note is to establish an extension of this classical result to the case of non-compact manifolds admiting proper cocompact group actions. To be more precise, we will prove such a result, when the action group is {\it unimodular}, for the   Mathai-Zhang index introduced in \cite{MathaiZhang2010} for such actions. The main result is stated in Theorem \ref{thm:main}.

This note is organized as follows. In Section~\ref{sec:Z1}, we recall the definition of the Mathai-Zhang index \cite{MathaiZhang2010} for  spin manifolds. In Section~\ref{sec:Z2} we prove our main result mentioned above.

\section{The Mathai-Zhang index on spin manifold}\label{sec:Z1}
Let $M$ be a non-compact even dimensional spin manifold. Let $G$ be a locally compact group.
%Denote the left invariant Haar measure on $G$ by $dg$.
We assume that $G$ acts on $M$ properly and cocompactly, where by proper we mean that the following map
\begin{equation*}
G \times M \rightarrow M\times M,\qquad (g,x) \mapsto (x,gx),
\end{equation*}
is proper (the pre-image of a compact subset is compact), while by cocompact we mean that the quotient space $M/G$ is compact.

We make the assumption that $G$ preserves the spin structure on $M$.

Let $g^{TM}$ be a Riemannian metric on the tangent vector bundle $TM$. Without loss of generality, we can and we will assume that $g^{TM}$ is $G$-invariant (cf. \cite[(2.3)]{MathaiZhang2010}).

Let $S(TM) = S_+(TM) \oplus S_-(TM)$ be the complex vector  bundle of spinors associated to $(TM, g^{TM})$.
Let $g^{S(TM)}$ and $\nabla^{S(TM)}$ be the canonically induced Hermitian metric and connection on $S(TM)$. Then the action of $G$ on $M$ lifts to an action on $S(TM)$, preserving $g^{S(TM)}$ and $\nabla^{S(TM)}$.

Let $E$ be a Hermitian vector bundle over $M$ such that it admits a $G$-action lifted from the action of $G$ on $M$. Let $g^E$ be a $G$-invariant Hermitian metric on $E$, and $\nabla^E$ be a $G$-invariant Hermitian connection on $E$.

  For any tangent vector $X\in TM$, let the Clifford action $c(X)$ on $S(TM)$    extend to   act on $S(TM)\otimes E$ by acting as identity on $E$, and we still denote this action by $c(X)$. The $G$-invariant metrics and connections on $S(TM)$ and $E$ induce canonically a $G$-invariant Hermitian metric $g^{S(TM)\otimes E}$ and a $G$-invariant connection $\nabla^{S(TM)\otimes E}$ on $S(TM)\otimes E$.

The twisted (by $E$) Dirac operator acting on $\Gamma(S(TM)\otimes E)$ is given by
\begin{equation}\label{eqn:Ds}
D^E=\sum\limits_{i=1}^{\dim M}c(e_i)\nabla^{S(TM)\otimes E}_{e_i}\ :\ \Gamma(S(TM)\otimes E) \rightarrow \Gamma(S(TM)\otimes E),
\end{equation}
where $e_1,\ldots,e_{\dim M}$ is an oriented orthonormal basis of $TM$.

Let $\Gamma(S(TM)\otimes E)$ carry the natural inner product such that for any $s_1, \, s_2\in$ $\Gamma(S(TM)\otimes E)$ with compact support,
\begin{equation}\label{eqn:inner}
(s_1,s_2)=\int_M \langle s_1,s_2\rangle_{S(TM)\otimes E} d{v}_{g^{TM}}.
\end{equation}

Let $\|\cdot\|_0$ be the $L^2$-norm associated to the inner product (\ref{eqn:inner}), let $\|\cdot\|_1$ be a (fixed) $G$-invariant Sobolev $1$-norm. Let $\mathbf{H}^0(M,S(TM)\otimes E)$ be the completion of $\Gamma(S(TM)\otimes E)$ under $\|\cdot\|_0$.

Denote the space of $G$-invariant smooth sections of $S(TM)\otimes E$ by $\Gamma(S(TM)\otimes E)^G$.

Now recall that the compactness of $M/G$ guarantees the existence of a compact subset $Y$ of $M$ such that $G(Y) = M$ (cf. \cite[Lemma 2.3]{Phillips}).
Let $U$, $U'$ be two open subsets of $M$ such that $Y \subset U$ and that the closures $\overline{U}$ and $\overline{U'}$ are both compact in $M$, and that $\overline{U} \subset U'$. The existence of $U$, $U'$ is clear.

Following \cite{MathaiZhang2010}, let  $f\in C^\infty_c(M)$ be a nonnegative function  such that $f|_U = 1$ and $\Supp(f) \subset U'$.

Let $\mathbf{H}^0_f(M,S(TM)\otimes E)^G$ and $\mathbf{H}^1_f(M,S(TM)\otimes E)^G$ be the completions of $\{fs:s\in\Gamma(S(TM)\otimes E)^G\}$ under $\|\cdot\|_0$ and $\|\cdot\|_1$ respectively.

Let $P_f$ denote the orthogonal projection from $\mathbf{H}^0(M,S(TM)\otimes E)$ to its subspace $\mathbf{H}^0_f(M,S(TM)\otimes E)^G$.

Clearly, $P_fD^E$ maps $\mathbf{H}^1_f(M,S(TM)\otimes E)^G$ into $\mathbf{H}^0_f(M,S(TM)\otimes E)^G$.

We now recall a basic result from \cite[Proposition 2.1]{MathaiZhang2010}.
\begin{prop}[Mathai-Zhang] \label{prop:fred}
The operator
\begin{equation*}
P_fD^E:\ \mathbf{H}_f^1(M,S(TM)\otimes E)^G \rightarrow \mathbf{H}_f^0(M,S(TM)\otimes E)^G
\end{equation*}
is a Fredholm operator.
\end{prop}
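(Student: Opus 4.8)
The plan is to prove Fredholmness by the classical route — an elliptic \emph{a priori} estimate combined with Rellich compactness — both of which become available because cocompactness confines every relevant section to a fixed compact set. First I would observe that every element of $\mathbf{H}_f^1(M,S(TM)\otimes E)^G$ is an $\|\cdot\|_1$-limit of sections $fs$ with $\Supp(f)\subset U'$, hence is supported in the fixed compact set $K:=\overline{U'}$; the same holds for the target space. Thus, although $M$ is non-compact, the operator $A:=P_fD^E$ only ever sees data on $K$, and the problem reduces to that of a Dirac-type operator on a compact region. Since $D^E$ is elliptic and formally self-adjoint, the standard G\aa rding inequality for sections supported in $K$ gives
\begin{equation*}
\|s\|_1 \le C\big(\|D^E s\|_0 + \|s\|_0\big),
\end{equation*}
with $C$ depending only on $K$ and the $G$-invariant geometric data; this is the only place where ellipticity enters.

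The heart of the matter is to upgrade this to an estimate in terms of $A$ itself, namely $\|s\|_1 \le C'\big(\|As\|_0 + \|s\|_0\big)$ for $s \in \mathbf{H}_f^1(M,S(TM)\otimes E)^G$. Writing $D^E s = As + (I-P_f)D^E s$, it suffices to show that $(I-P_f)D^E$, as a map $\mathbf{H}_f^1(\ldots)^G \to \mathbf{H}^0(\ldots)$, is compact, for then it is a lower-order perturbation that can be absorbed into the left-hand side. To see this, I would approximate $s$ by $fu$ with $u\in\Gamma(S(TM)\otimes E)^G$ and use the $G$-equivariance of $D^E$ to compute $D^E(fu)=fD^E u + c(df)u$. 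Since $D^E u$ is again $G$-invariant, the term $fD^E u$ lies in $\mathbf{H}_f^0(\ldots)^G$ and is annihilated by $I-P_f$; hence $(I-P_f)D^E(fu)=(I-P_f)\,c(df)u$. This is a zeroth-order Clifford multiplication with coefficient $c(df)$ supported in the compact set $\Supp(df)\subset U'\setminus U$, applied to $u$ and followed by the bounded projection $I-P_f$. As it involves no derivative of $u$ while the domain norm controls one derivative, Rellich's compact embedding $\mathbf{H}^1(K)\hookrightarrow\mathbf{H}^0(K)$ should render it compact.

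Granting the estimate of the previous paragraph, the conclusion is routine. Together with the compactness of the embedding $\mathbf{H}_f^1(\ldots)^G \hookrightarrow \mathbf{H}_f^0(\ldots)^G$ (again Rellich, via the fixed compact support), it shows that $A$ has finite-dimensional kernel and closed range. Running the identical argument for the formal adjoint $A^{\ast}=D^E P_f$ — using that $D^E$ is formally self-adjoint and that $P_f$ is an orthogonal projection — yields a finite-dimensional cokernel. Hence $A=P_fD^E$ is Fredholm. Note that unimodularity of $G$ is not needed here; it will enter only later, when the index is shown to be well behaved.

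The hard part will be the compactness of $(I-P_f)D^E$ on the $f$-cut-off invariant space. The delicate issue is that, to bound $c(df)u$ in $\mathbf{H}^0$ by the domain norm of $s=fu$, one must recover control of the invariant section $u$ over $\Supp(df)$ from the data $s$; this cannot be achieved by dividing by $f$, since $f$ degenerates to $0$ on part of $\Supp(df)$. The correct mechanism is to exploit $G$-invariance together with cocompactness: the compact set $Y$ with $G(Y)=M$ lies inside the region $U$ where $f\equiv 1$, so $s=u$ near $Y$, and the value of $u$ at any point of $\Supp(df)$ is transported by a group element from its value on $Y$, where it agrees with $s$. Making this transport quantitative — so that a bounded sequence in $\mathbf{H}_f^1(\ldots)^G$ produces an $\mathbf{H}^0$-precompact sequence of sections $c(df)u$ — is the technical crux, and is precisely where properness of the action and the $G$-invariance of all the structures are used.
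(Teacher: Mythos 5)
The paper itself does not prove this statement; it quotes it from Mathai--Zhang (their Proposition 2.1), so your proposal can only be measured against that original argument. Its skeleton --- G{\aa}rding's inequality on the fixed compact set $\overline{U'}$, the Leibniz identity $D^E(fu)=fD^Eu+c(df)u$ combined with $(I-P_f)(fD^Eu)=0$, and Rellich compactness --- is indeed the right one. But your write-up ends by declaring the central estimate to be an unexecuted ``technical crux'', and that estimate is precisely what makes the proposition true: one needs a constant $C$ with $\|c(df)u\|_0\le C\|fu\|_0$ for every $u\in\Gamma(S(TM)\otimes E)^G$. You correctly name the mechanism (invariance transports values on $\Supp(df)$ back to $Y\subset U$, where $u=fu$) but never carry it out, although it takes three lines: by properness, $Z=\{g\in G:\ gY\cap\overline{U'}\neq\emptyset\}$ is compact, so $\overline{U'}\subset Z(Y)\subset\bigcup_{g\in Z}gU$; by compactness of $\overline{U'}$, finitely many translates $g_1U,\dots,g_NU$ cover $\overline{U'}$; and since $|u|^2$ and $dv_{g^{TM}}$ are $G$-invariant, $\int_{\overline{U'}}|u|^2\,dv\le N\int_U|u|^2\,dv=N\int_U|fu|^2\,dv\le N\|fu\|_0^2$. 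Note that this estimate gives more than the compactness you ask of $(I-P_f)D^E$: it shows that term is bounded by the $\|\cdot\|_0$-norm of $fu$, so G{\aa}rding's inequality for $P_fD^E$ follows by direct substitution, with no need to absorb a compact perturbation.

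The second, unacknowledged, gap is more serious: the cokernel step is not ``routine'', and the sentence about ``running the identical argument for the formal adjoint $A^{\ast}=D^EP_f$'' would not survive scrutiny. The cokernel of $A$ consists of those $v\in\mathbf{H}^0_f(M,S(TM)\otimes E)^G$ with $\langle D^E(fu),v\rangle=0$ for all invariant $u$; every such test section vanishes near the boundary of $U'$, so what you must control is the kernel of a \emph{maximal}-type operator, not a second copy of the minimal one, and symmetry plus G{\aa}rding plus Rellich alone do not make that finite-dimensional. A concrete counterexample to this logic: the Dirac operator on the open unit disk with domain $H^1_0$ is symmetric, satisfies G{\aa}rding's inequality and Rellich compactness, has trivial kernel, and yet the orthogonal complement of its range in $L^2$ is infinite-dimensional (it contains all $L^2$ holomorphic and antiholomorphic spinors). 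What rescues the present situation is, once again, $G$-invariance: the same covering argument identifies $\mathbf{H}^0_f$ with the space of $G$-invariant $L^2_{\mathrm{loc}}$ sections (the ``boundary'' at $\partial U'$ is an artifact of the cutoff), and one must then show --- by elliptic regularity combined with an averaging-over-$G$ argument --- that any $v$ orthogonal to the range solves a $G$-invariant elliptic equation, whose space of invariant solutions is finite-dimensional by another G{\aa}rding/Rellich/covering argument. Without some such use of invariance on the cokernel side, your proof does not close.
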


Let $D_\pm^E:\Gamma(S_\pm(TM)\otimes E) \rightarrow \Gamma(S_\mp(TM)\otimes E)$ be the restrictions of $D^E$ on $\Gamma (S_\pm(TM)\otimes E)$  respectively.
It has been shown in \cite{MathaiZhang2010} that $\ind(P_fD^E_+)$ is independent of the choices of the cut-off function $f$, as well as the $G$-invariant metrics and connections involved. Following \cite[Defition 2.4]{MathaiZhang2010}, we denote $\ind(P_fD^E_+)$ by $\ind_G(D^E_+)$.

\section{A Lichnerowicz vanishing theorem for Mathai-Zhang index}\label{sec:Z2}
In this section, we extend the Lichnerowicz vanishing theorem to the case of $\ind_G(D^E_+)$ when the action group $G$ is {\it unimodular}.

%The following notations and facts will be used in the proof of Theorem 3.1.

Let   ${k^{TM}}$ be the scalar curvature of $g^{TM}$. Let  ${R}^E$ be the curvature of $\nabla^E$.

Let $\{e_i\}$ be a local orthonormal basis of $TM$. Let $c({R}^E)$, which acts  on $\Gamma(S(TM)\otimes E)$, be defined by
\begin{equation}
c\left({R}^E\right) =\frac{1}{2} \sum_{i,\,j=1}^{\dim M}c(e_i)c(e_j) \, {R}^E(e_i,e_j ).
\end{equation}

Recall the famous   Lichnerowicz formula \cite{Lichnerowicz1963} (cf. \cite[Chapter II, Theorem 8.17]{LawMich}), which states that,
\begin{equation}\label{eqn:Lich}
\left(D^E\right)^2 = -\Delta^E + \frac{{k^{TM}}}{4} + c\left({R}^E\right),
\end{equation}
where $$\Delta^E=\sum_{i=1}^{\dim M}\left(\left(\nabla_{e_i}^{S(TM)\otimes E}\right)^2-\nabla^{S(TM)\otimes E}_{\nabla^{TM}_{e_i}e_i}\right)$$ is the Bochner  Laplacian.

We can now state the  main result   this note   as follows.
\begin{thm}\label{thm:main}
Let $M$ be a non-compact $spin$ manifold carrying a proper cocompact action by a locally compact {\rm unimodular} group $G$, such that the $G$-action preserves the spin structure on $M$.  Let $g^{TM}$ be a $G$-invariant metric on the tangent bundle $TM$. Let $E$ be a Hermitian vector bundle over $M$,  carrying a $G$-action lifted from that on $M$, as well as a $G$-invariant Hermitian metric   and a $G$-invariant Hermitian connection $\nabla^E$.   If  one assumes that $ {k^{TM}} +4\, c({R}^E) > 0$ holds pointwise over $M$,  then the Mathai-Zhang index $\ind_G(D^E_+)$ vanishes.
\end{thm}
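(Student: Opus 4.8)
The plan is to run the classical Bochner--Lichnerowicz argument and then to address the one genuinely new difficulty, namely the presence of the projection $P_f$ in the definition of $\ind_G(D^E_+)$.

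First I would upgrade the pointwise positivity hypothesis to a uniform one. The endomorphism $\frac{k^{TM}}{4}+c(R^E)$ of $S(TM)\otimes E$ is self-adjoint and, by assumption, positive definite at every point; since all the data are $G$-invariant, its smallest eigenvalue is a $G$-invariant continuous function on $M$, which therefore descends to a continuous positive function on the compact quotient $M/G$. Hence there is a constant $\epsilon>0$ with $\frac{k^{TM}}{4}+c(R^E)\ge \epsilon$ everywhere. Feeding this into the Lichnerowicz formula (\ref{eqn:Lich}) and integrating by parts, I obtain, for every compactly supported $s$, the estimate $\|D^E s\|_0^2=\|\nabla^{S(TM)\otimes E}s\|_0^2+\big(\big(\tfrac{k^{TM}}{4}+c(R^E)\big)s,s\big)\ge \epsilon\,\|s\|_0^2$. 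As the action is cocompact, $M$ is complete and $D^E$ is essentially self-adjoint, so this lower bound extends by density to the domain of $D^E$ and shows that $D^E$ has trivial $L^2$-kernel; indeed $D^E$ is invertible on $\mathbf{H}^0(M,S(TM)\otimes E)$.

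Next I would reduce the theorem to the vanishing of two kernels. By definition $\ind_G(D^E_+)=\ind(P_fD^E_+)$, which by Proposition \ref{prop:fred} is the integer $\dim\ker(P_fD^E_+)-\dim\ker(P_fD^E_-)$, so it suffices to show both kernels vanish. Here the obstacle appears: the estimate above controls $\|D^E s\|_0$, whereas a section in $\ker(P_fD^E_\pm)$ only satisfies $D^E s\perp \mathbf{H}^0_f(\cdots)^G$, so $P_f$ can a priori destroy the lower bound. The way around this is the algebraic identity, for $G$-invariant $s'$, $D^E(fs')=c(df)\,s'+f\,D^Es'$. Since $D^E$ is $G$-invariant, $D^Es'$ is again $G$-invariant, so $fD^Es'$ already lies in the target space $\mathbf{H}^0_f(\cdots)^G$; consequently $(1-P_f)D^E(fs')=(1-P_f)\big(c(df)\,s'\big)$ is governed entirely by the term $c(df)s'$, which is supported in the fixed compact set $\Supp(df)\subset U'\setminus U$.

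Combining the two displays gives, for $s=fs'\in\ker(P_fD^E_\pm)$, a two-sided squeeze $\epsilon\|fs'\|_0^2\le\|D^Es\|_0^2=\|(1-P_f)c(df)s'\|_0^2\le C\,\|s'\|_{L^2(\Supp df)}^2$. To turn this into a genuine contradiction I would exploit the metric-independence of $\ind_G(D^E_+)$ established in \cite{MathaiZhang2010} and rescale $g^{TM}\rightsquigarrow t^{-2}g^{TM}$: the Lichnerowicz constant then grows like $t^2\epsilon$ relative to the rescaled inner product, while the zeroth-order commutator term $c(df)$ grows only linearly in $t$, so for $t$ large enough the inequality forces $s'=0$, and the same argument applied to $D^E_-$ kills the cokernel. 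Hence $\ind_G(D^E_+)=\ind(P_fD^E_+)=0$. The main obstacle is precisely this interplay between the projection $P_f$ and the positivity bound; it is here, and in the integration-by-parts step underlying both the Bochner identity against the $G$-invariant pairing and the identification of $\ker(P_fD^E_-)$ with the cokernel of $P_fD^E_+$, that the \emph{unimodularity} of $G$ is essential, as it guarantees that these adjoint computations carry no modular-function correction.
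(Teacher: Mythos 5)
There is a genuine gap, and it sits exactly at the point you yourself identified as the main obstacle. The closing step --- rescaling $g^{TM}\to t^{-2}g^{TM}$ to beat the commutator term --- cannot work, because a constant homothety is merely a change of units, and every inequality you have derived is scale-invariant. Under $g^{TM}\to t^{-2}g^{TM}$ the positivity constant does become $t^{2}\epsilon$, but the commutator scales too: Clifford multiplication $c(df)$ in the rescaled metric has pointwise norm $t\,|df|_{g^{TM}}$, and it enters your squeeze \emph{squared}, so both sides of $\epsilon\|fs'\|_{0}^{2}\le\|(1-P_f)c(df)s'\|_{0}^{2}$ acquire the same factor $t^{2}$ (the volume factors $t^{-\dim M}$ in the $L^2$-norms cancel as well). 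Comparing ``$t^{2}\epsilon$'' against ``$c(df)$ grows linearly in $t$'' is comparing a squared quantity with an unsquared one; at the level of norms both sides grow like $t$, and the rescaled inequality is literally the old one, so no choice of $t$ forces $s'=0$. Worse, even for a fixed metric the squeeze was never close to a contradiction: since $s'$ is $G$-invariant and finitely many $G$-translates of $U$ cover the compact set $\Supp (df)$, one has $\|s'\|^{2}_{L^2(\Supp (df))}\le N\|fs'\|^{2}_{0}$ (using $f\equiv 1$ on $U$ and $G$-invariance of the metric), so your inequality only asserts $\epsilon\le CN$, a relation among constants that says nothing about $s'$. This reflects the real phenomenon at stake: your first paragraph correctly shows $D^E$ is invertible on $L^2$, but $G$-invariant sections are not $L^2$ when $G$ is non-compact, so $L^2$-invertibility has no direct bearing on $\ind_G(D^E_+)$; bridging that gap is precisely the content you would need and do not supply. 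A secondary gap: you use $\ind(P_fD^E_+)=\dim\Ker(P_fD^E_+)-\dim\Ker(P_fD^E_-)$, i.e.\ the identification of the cokernel of $P_fD^E_+$ with $\Ker(P_fD^E_-)$, but this adjoint computation is asserted, not proved.

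The paper takes a much shorter route that bypasses $P_f$ entirely. Unimodularity is used exactly once, as a black box: by \cite[Theorem 2.7]{MathaiZhang2010}, $\ind_G(D^E_+)=\dim\left(\left(\Ker D^E_+\right)^G\right)-\dim\left(\left(\Ker D^E_-\right)^G\right)$. It then suffices to prove $(\Ker D^E)^G=0$, and this is done pointwise rather than in $L^2$: for $G$-invariant $s$ with $D^Es=0$, the Lichnerowicz formula (\ref{eqn:Lich}), paired against compactly supported test functions, yields the pointwise Bochner identity $\frac{1}{2}\Delta\left(|s|^2\right)=|\nabla s|^2+\langle\mathfrak{R}s,s\rangle\ge\langle\mathfrak{R}s,s\rangle$, where $\mathfrak{R}=\frac{k^{TM}}{4}+c(R^E)$; since $|s|^2$ is $G$-invariant and $M/G$ is compact, $|s|^2$ attains a maximum at some $p\in M$, where $\Delta\left(|s|^2\right)\le 0$, and positivity of $\mathfrak{R}$ forces $s(p)=0$, hence $s\equiv 0$. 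Note that this argument needs only pointwise positivity (no uniform $\epsilon$), no completeness, and no essential self-adjointness. To salvage your approach you would in effect have to reprove Mathai--Zhang's Theorem 2.7, which is exactly the ingredient your proposal is missing.
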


\begin{proof}
Since $G$ is {\it unimodular}, by a result of Mathai-Zhang \cite[Theorem 2.7]{MathaiZhang2010}, one has that
\begin{equation}\label{bb}
\ind_G\left(D^E_+\right)=\dim\left(\left(\Ker D_+^E\right)^G\right)-\dim\left(\left(\Ker D_-^E\right)^G\right).
\end{equation}

Set $\mathfrak{R}=\frac{{k^{TM}}}{4} + c({R}^E)$.

For any $s\in (\Ker D^E)^G$ and $\varphi \in C_c^\infty(M)$, by    (\ref{eqn:Lich}) one gets that
\begin{equation}\label{cc}
0 = \int_M \left\langle\left(D^E\right)^2s, \varphi s \right\rangle dv_{g^{TM}}=\int_M \Big( \langle-\Delta^E s, \varphi s\rangle + \varphi \langle\mathfrak{R}s, s\rangle \Big)dv_{g^{TM}}.
\end{equation}

By (\ref{cc}), one has,  where we denote $\nabla^{S(TM)\otimes E}$ by $\nabla$ for simplicity,
\begin{eqnarray}\label{dd}
0 & = &
\int_M\left( \sum\limits_{i=1}^{\dim M} \left\langle\nabla_{e_i}s,\nabla_{e_i}(\varphi s)\right\rangle+ \varphi \langle\mathfrak{R}s, s\rangle \right) dv_{g^{TM}}  \nonumber \\
  & = &
\int_M\left( \sum\limits_{i=1}^{\dim M} e_i(\varphi)\langle\nabla_{e_i}s, s\rangle
  +\sum\limits_{i=1}^{\dim M} \varphi\langle\nabla_{e_i} s,\nabla_{e_i} s\rangle+\varphi\langle\mathfrak{R}s, s\rangle \right) dv_{g^{TM}}\nonumber  \\
  & = &
\int_M\left( \frac{1}{2}\sum\limits_{i=1}^{\dim M} e_i(\varphi)e_i\left(|s|^2\right)
  +\varphi |\nabla s|^2+\varphi\langle\mathfrak{R}s, s\rangle \right) dv_{g^{TM}}\nonumber  \\
  & = &
\int_M\left( \frac{1}{2}\sum\limits_{i=1}^{\dim M} e_i\left(\varphi e_i\left(|s|^2\right)\right)-\frac{1}{2}\sum\limits_{i=1}^{\dim M} \varphi e_i\left(e_i\left(|s|^2\right)\right)
  +\varphi |\nabla s|^2+\varphi\langle\mathfrak{R}s, s\rangle \right) dv_{g^{TM}}\nonumber  \\
  & = &
\int_M \varphi \left( -\frac{1}{2}\Delta\left( |s|^2\right) +|\nabla s|^2+\langle\mathfrak{R}s, s\rangle \right)dv_{g^{TM}}.
\end{eqnarray}

Since $\varphi$ is arbitrary, one sees from (\ref{dd}) that for any $s \in (\Ker D^E)^G$, there holds,
\begin{equation}\label{aa}
\frac{1}{2}\Delta\left(|s|^2\right)=|\nabla s|^2+\langle\mathfrak{R}s, s\rangle\geqslant \langle\mathfrak{R}s, s\rangle .
\end{equation}

Since the $G$-action on $M$ is cocompact, and that $|s|^2$ is  clearly $G$-invariant,    $|s|^2$  attains its maximum  at certain point $p\in M$, on which one has, by the standard maximum principle, that
$$\Delta\left(|s|^2\right)\leqslant 0.$$
Combining with  (\ref{aa}), one sees that under the assumption that
$\mathfrak{R}>0$ over $M$,
\begin{equation}\label{ineq311}
|s(p)|^2=0,
\end{equation}
which implies that $s\equiv 0$ on $M$.

Combining with (\ref{bb}), one completes the proof of Theorem \ref{thm:main}.
\end{proof}

\begin{cor}\label{cor:spin}
Let $M$ be a non-compact $spin$ manifold carrying a proper cocompact action by a locally compact {\rm unimodular} group $G$, such that the $G$-action preserves the spin structure on $M$. If $M$ admits a $G$-invariant Riemannian metric with positive scalar curvature,  then  the Mathai-Zhang index $\ind_G(D_+)$ vanishes.
\end{cor}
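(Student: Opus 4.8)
The plan is to deduce the corollary directly from Theorem~\ref{thm:main} by specializing the twisting bundle $E$ to be trivial. First I would take $E = M \times \C$ to be the trivial line bundle, equipped with the standard flat Hermitian metric and the trivial product connection $\nabla^E = \dd$. The $G$-action on $M$ lifts to $E$ by acting as the identity on the $\C$-factor, and with respect to this lift both the Hermitian metric and the connection are manifestly $G$-invariant; thus all of the hypotheses of Theorem~\ref{thm:main} that concern $E$ are satisfied. For this choice the twisted Dirac operator $D^E$ is nothing but the ordinary Dirac operator $D$, so that $\ind_G(D_+^E) = \ind_G(D_+)$.

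Next I would observe that the trivial connection is flat, so its curvature vanishes, $R^E = 0$, and therefore $c(R^E) = 0$ as an endomorphism of $\Gamma(S(TM) \otimes E) = \Gamma(S(TM))$. Consequently the pointwise positivity hypothesis $k^{TM} + 4\,c(R^E) > 0$ of Theorem~\ref{thm:main} collapses to the single scalar inequality $k^{TM} > 0$.

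Finally, the assumption that $M$ carries a $G$-invariant Riemannian metric of positive scalar curvature is exactly the assertion that $k^{TM} > 0$ holds pointwise for this metric. Feeding this into Theorem~\ref{thm:main} then yields $\ind_G(D_+) = \ind_G(D_+^E) = 0$, which is the desired conclusion. I expect essentially no obstacle here, since the corollary is a pure specialization of the main theorem; the only point deserving a moment's care is confirming that the trivial bundle genuinely provides an admissible $G$-equivariant $E$ endowed with a $G$-invariant connection of vanishing curvature, and this is immediate from the construction above.
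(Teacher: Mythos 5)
Your proof is correct and is exactly the intended derivation: the paper states Corollary~\ref{cor:spin} as an immediate specialization of Theorem~\ref{thm:main}, obtained by taking $E$ to be the trivial line bundle with the trivial flat $G$-invariant connection, so that $c(R^E)=0$ and the hypothesis reduces to $k^{TM}>0$. Nothing is missing.
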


$\  $

\noindent\bf{\footnotesize Acknowledgements.}\quad\rm
{\footnotesize The author thanks Professor Weiping Zhang for helpful discussions.}\\[4mm]

\end{document}